\theoremstyle{plain}
\newtheorem{theorem}{Theorem}
\newtheorem{lemma}[theorem]{Lemma}
\newtheorem{proposition}[theorem]{Proposition}
\newtheorem{definition}[theorem]{Definition}
\DeclareMathOperator{\CS}{CS}
\DeclareMathOperator{\CSL}{CSL}
\DeclareMathOperator{\CSR}{CSR}
\title{Repetition in Permutation Wordle}
\author[Hiveley]{Aurora Hiveley}
\address[A.~Hiveley]{Department of Mathematics, Rutgers University, Piscataway, NJ 08854}
\email{\textcolor{blue}{\href{mailto:aurora.hiveley@rutgers.edu}{aurora.hiveley@rutgers.edu}}}
\begin{document}
\begin{abstract}
In a game of permutation wordle, a player attempts to guess a secret permutation in the fewest number of guesses possible. Previously, Samuel Kutin and Lawren Smithline \cite{kutin2024} introduced this game and proposed a strategy called cyclic shift, which they conjecture performs optimally. We continue our investigation of this conjecture by considering how information is obtained and, at times, repeated during a game of permutation wordle using an arbitrary strategy. This analysis includes several algorithms to construct a secret permutation which prompts inefficient repetition according to the player's strategy, as well as proofs of their efficacy.
\end{abstract}

\maketitle

\section{Introduction} \label{sec:intro}
Consider a modification of Josh Wardle's New York Times game, Wordle, where instead of attempting to guess a 5-letter word, the player attempts to guess a permutation on $[n]$. On each turn, the player guesses a permutation and is subsequently told which positions of their guess are correct. The process repeats until the player correctly guesses the secret permutation. The player's goal is to complete the game in the fewest number of guesses possible. This game was first introduced by Kutin and Smithline in 2024, and they proposed a strategy called \textit{cyclic shift} which they conjecture has optimal performance in a game of permutation wordle. Cyclic shift, which we abbreviate $\CS$ in this paper, generates the $k$-th guess $\gamma_k$ in a game of permutation wordle by locking in all correct entries from $\gamma_{k-1}$ and shifting each incorrect entry one position to the right, skipping over the locked in entries in the process.

Previously, Kutin and Smithline studied the connection between the number of guesses that a game using $\CS$ will last and the number of excedances of the secret permutation, which in turn is related to the Eulerian numbers. From there, Hiveley \cite{hiveley2025} proved optimality of cyclic shift for games ending in exactly three guesses. In this work, a strategy is formalized as a list $S = [s_1, s_2, \dots, s_n]$ where $s_i$ is a permutation of length $i$ for $1 \leq i \leq n$, and the next guess $\gamma_{k}$ is generated by considering the set $\mathcal{I}_{k-1}$ of incorrect entries from guess $k-1$ and permuting them according to the permutation $s_{|\mathcal{I}_{k-1}|}$. Note that in this language, $\CS = [[1], [2,1], [2,3,1], \dots, [2,3,\dots, n, 1]]$, and we will sometimes use the list-indexing notation $S[k]$ instead of $s_k$ to refer to the $k$-th component of a strategy. 

In constructing a strategy for permutation wordle, we make a few assumptions about what an efficient game looks like. In a game consisting of guesses $\gamma_1, \gamma_2, \dots, \gamma_r$, we assume for simplicity that $\gamma_1$ is always the trivial permutation $[1,2,\dots, n-1,n]$. We also assume that a player wants to maximize the amount of new information learned on each turn, and a guess which fails to do so is inherently inefficient. 
For example, say that $1 \in \mathcal{I}_1$, meaning that the entry 1 is incorrect on guess one and therefore \textit{not} in the first position in the secret permutation. If the player were to guess the entry 1 in position one later on in the game, say on guess number $t$, then $\gamma_t$ cannot possibly be the secret permutation, so it is intrinsically an inefficient guess. The player should guess the entry 1 in some other un-guessed position in an effort to gain new information. In this sense, we require that the strategy components used to permute incorrect entries between guesses are \textit{derangements}, i.e. lacking fixed points. This guarantees that each incorrect entry is moved to a new spot compared to the previous guess, but how can we be sure that the entry's new position is \textit{brand new} in the context of the game as a whole? Our work in this paper will study just that: what we know about repetition in the course of a game, and what this framework indicates about Kutin and Smithline's conjecture about the optimality of $\CS$. 
We will begin with a discussion of repetition within a game of permutation wordle in Section \ref{sec:rep}, including several algorithms to produce secret permutations that spur such repetition. We will then conclude by examining repetition for games using the strategy $\CSL$, which was previously defined and studied in \cite{hiveley2025}.

\section{Repetition in Inductive Strategies} \label{sec:rep}

In \cite{hiveley2025}, we previously introduced the idea of an inductively constructed strategy, which has some arbitrary permutation for the $n$-th component $S[n]$, but the first $n-1$ components are all rightward cyclic shifting. The motivation for this framework came from an attempt to prove the optimality of $\CS$ by induction. However, in \cite{hiveley2025} it was shown that for any derangement acting as $S[n]$, the average number of guesses needed before $\mathcal{J}_k$ (the set of \textit{correct} entries on guess $k$) is nonempty and induction takes over is actually constant for a fixed $n$. This means that in order to prove Kutin and Smithline's conjecture about the optimality of $\CS$, we will have to pursue an approach that is more complex than simply considering inductively constructed strategies at the ``highest level." 

Instead, we turn our attention to the relationship between a strategy's components. 
First, consider an example game using the strategy $S = [[1], [2, 1], [2, 3, 1],[2, 1, 4, 3], [3, 4, 5, 2, 1]]$ with the secret permutation $p = [4,1,5,2,3]$. The game will proceed as follows:
\[\begin{array}{lll}
   \gamma_1 = [1,2,3,4,5] & \quad & \mathcal{I}_1 = \{1,2,3,4,5\} \\ 
   \gamma_2 = [5,4,1,2,3] & & \mathcal{I}_2 = \{1,2,3\} \\ 
   \gamma_3 = [1,5,4,2,3] & & \mathcal{I}_3 = \{1,2,3\} \\ 
   \gamma_3 = [4,1,5,2,3] & & \mathcal{I}_4 = \emptyset \\ 
\end{array}\]

Notice that in this game, the entry 1 is guessed incorrectly in position one in both $\gamma_1$ \textit{and} $\gamma_3$. By the assumptions established in Section \ref{sec:intro}, this game is suboptimal in the sense that $\gamma_3$. There is no way that $\gamma_3$ could have been the secret permutation since we already knew that $p[1] \neq 1$, so the game could not have ended after three guesses, \textit{and} we wasted an opportunity to learn new information about which digit is in position one and which position the digit 1 occupies in $p$. So, this strategy can be improved because it fails to perform optimally when the secret permutation is $p = [4,1,5,2,3]$. 

After some experimental calculations using Maple, we observe that this repetition of incorrect information actually happens for \textit{every} possible strategy in our construction for at least one possible secret permutation. All strategies, that is, except for $\CS$. We begin by presenting the following lemma. Observe that while we consider only rightward cyclic shifting in the proof, leftward cyclic shifting is handled analogously.

\begin{lemma} \label{lem:nodupes}
    $\CS$ never duplicates incorrect information.
\end{lemma}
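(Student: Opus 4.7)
My plan is to track each entry's position across the sequence of guesses $\gamma_1, \gamma_2, \ldots$ produced by $\CS$. Fix an arbitrary secret permutation $\sigma$ and an entry $e \in [n]$. Since $\gamma_1$ is the identity, $e$ begins at position $e$; if $e$ is a fixed point of $\sigma$ then it is locked in immediately and cannot participate in any repeated incorrect guess. Otherwise, by definition of $\CS$, at each subsequent turn $e$ moves rightward (cyclically) to the next element of $\mathcal{I}_t$, the set of currently-incorrect positions. It therefore suffices to show that the sequence of positions visited by $e$ before it is locked in contains no repetitions.

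The central observation I will use is that $\sigma^{-1}(e) \in \mathcal{I}_t$ for every turn $t$ up until $e$ itself is locked in there. Indeed, if $\sigma^{-1}(e)$ were not in $\mathcal{I}_t$, that position would already be locked in, which means its correct value $e$ had been placed there at an earlier turn---contradicting that $e$ is still in motion elsewhere.

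With this in hand, I would split into two cases. If $\sigma^{-1}(e) > e$, the trajectory of $e$ ascends strictly through $\mathcal{I}_t \cap (e, n]$ and arrives at $\sigma^{-1}(e)$ without wrapping, giving a strictly increasing and hence repetition-free sequence of positions. If $\sigma^{-1}(e) < e$, the trajectory ascends to the rightmost element of the incorrect set and then wraps to the leftmost, call it $p'$. Since $\sigma^{-1}(e)$ still lies in the incorrect set at that moment, $p' = \min \mathcal{I}_t \leq \sigma^{-1}(e) < e$. After the wrap, $e$ moves rightward through positions in $[p', \sigma^{-1}(e)]$---all strictly less than $e$---and so these are disjoint from the pre-wrap positions, which all lie in $[e, n]$. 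In either case $e$ visits each position at most once, and the argument applies uniformly to every entry, so no value-position pair is ever incorrectly guessed twice.

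The main obstacle, such as it is, will be the bookkeeping of the cyclic order on the shrinking sets $\mathcal{I}_t$ and confirming that $e$ wraps at most once before reaching $\sigma^{-1}(e)$. Once the observation that $\sigma^{-1}(e) \in \mathcal{I}_t$ is in place, however, the case analysis is direct and yields the lemma.
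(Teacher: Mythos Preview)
Your proof is correct and shares the paper's central observation---that the correct position $\sigma^{-1}(e)$ of an entry $e$ remains in the incorrect set $\mathcal{I}_t$ until $e$ arrives there---but packages it differently. The paper argues by contradiction: if an entry returned to a previously visited position, it would have cycled through every open position without ever landing on its correct one, which is impossible since that correct position was among the open ones throughout. You instead give a direct trajectory argument, splitting on whether $\sigma^{-1}(e)$ lies to the right or left of $e$'s starting position and showing the resulting path is strictly monotone with at most one wrap, hence injective. Your version is more explicit and self-contained (the paper leans on Kutin--Smithline for the setup), at the cost of a small case analysis; the paper's contradiction is terser but leaves the reader to verify the same key fact you state outright.
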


\begin{proof}
    This follows directly from Definition 3.1 and Proposition 3.3 in \cite{kutin2024}. Assume for the sake of contradiction that $\CS$ produces two different guesses in which an incorrect entry is guessed in the same position. Let these two guesses be $\gamma_s$ and $\gamma_t$, and without loss of generality let $s < t$. Let $\gamma_s(i) = \gamma_t(i) = j$ where element $j \in \mathcal{I}_s, \mathcal{I}_t$.

    Since $j \in \mathcal{I}_s$, entry $i$ is shifted once to the right in $\gamma_{s+1}$. For $\gamma_t(i) = j$ later on, entry $i$ must have skipped over every open position in the permutation to return to position $i$. However, this means that there is no open position where entry $i$ is correct, which contradicts either the secret permutation being a permutation at all or the correctness of a previously considered correct position.    
\end{proof}

As mentioned in Section \ref{sec:intro}, we assume that a strategy \textit{must} be inefficient if incorrect information is duplicated. In other words, if $\gamma_i[m] = \gamma_j[m]$ for $i < j$ where $m \in \mathcal{I}_i$, then the strategy may be improved by simply changing $\gamma_j[m]$. In this scenario, $\gamma_j$ is guaranteed to be incorrect since position $m$ is incorrect. In this sense, $\CS$ certainly satisfies the barest minimum criteria for an effective strategy. And, in fact, any other strategy fails to meet this criteria.

\begin{theorem} \label{thm:alldupe}
    Any strategy which is \textit{not} $\CS$ (and whose components are derangements) repeats information for at least one permutation.
\end{theorem}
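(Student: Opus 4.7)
The plan is to establish the contrapositive: if a strategy $S$ with derangement components causes no repetition on any secret permutation, then $S = \CS$. I would proceed by induction on the component index. The base cases are immediate, since $s_1 = [1]$ is forced and $s_2 = [2, 1] = \CS[2]$ is the only derangement of $[2]$.

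For the inductive step, assume $s_j = \CS[j]$ for all $j < k$ and suppose toward contradiction that $s_k \neq \CS[k]$. I would construct a secret $p \in S_n$ (with $n \geq k$) on which $S$ repeats. The natural first move is to take $p$ whose fixed points are exactly $\{k+1, \ldots, n\}$, so that $\mathcal{I}_1 = [k]$ and the first nontrivial component applied is $s_k$. By the inductive hypothesis, every smaller component subsequently invoked is a rightward cyclic shift, which means (by Lemma~\ref{lem:nodupes} applied to the ``tail'' of the game) that any repetition must involve a guess produced by an application of $s_k$ itself.

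The argument then splits on the cycle structure of $s_k$. In the first case, $s_k$ contains a cycle $C$ of length $\ell < k$. I would choose $p|_{[k]}$ to be a derangement that disagrees pointwise with each iterate $s_k^{-t}$ for $t = 0, 1, \ldots, \ell - 1$; a counting argument on the number of derangements of $[k]$ shows such a $p|_{[k]}$ exists whenever $k$ is large enough to admit a cycle of length $\ell < k$. Under this choice, no entry becomes correct during the first $\ell$ guesses, so $s_k$ is iterated $\ell$ consecutive times; on the $(\ell+1)$-st guess the entries indexed by positions in $C$ return to their $\gamma_1$-values, yielding a repetition with $\gamma_1$.

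The main obstacle is the remaining case, in which $s_k$ is a single $k$-cycle different from $\CS[k] = (1\,2\,\ldots\,k)$. Here the iterates $s_k^{0}, s_k^{-1}, \ldots, s_k^{-(k-1)}$ are pairwise distinct, so no repetition can arise from pure cycling alone. My approach would instead be to exploit the mismatch between $s_k$'s cyclic traversal order and the left-to-right indexing used by the smaller $\CS$-components once the incorrect set shrinks. I would choose $p|_{[k]}$ so that exactly one position becomes correct at a strategically chosen step; after this, the $\CS$-style reshuffling of the remaining $k-1$ incorrect entries --- now re-indexed in an order inconsistent with $s_k$'s orbit --- can place an entry back into a position it occupied in an earlier guess. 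Pinpointing the precise derangement witness and verifying that the collision is a genuine repetition in the sense of Section~\ref{sec:rep} is the most delicate piece of the argument, and is where I expect the main technical work to lie.
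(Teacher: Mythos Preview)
Your reduction to the smallest index $k$ with $s_k \neq \CS[k]$ and fixing the entries $k+1,\ldots,n$ is exactly the paper's skeleton (Section~\ref{sec:allstrats}); the divergence is in how the sub-game on $[k]$ is handled. The paper splits on the displacement vector $D = d(s_k^{-1})$ and gives explicit four-guess witnesses (Algorithms~\ref{alg:2s}--\ref{alg:csl&csr}), whereas you split on the cycle type of $s_k$. Your Case~1 is sound once $\ell$ is taken to be the \emph{minimum} cycle length: then $\mathrm{id},\,s_k^{-1},\,\ldots,\,s_k^{-(\ell-1)}$ form an $\ell\times k$ Latin rectangle with $\ell\le k/2$, and Hall's theorem (Latin-rectangle extension) supplies a further row, which is the required $p|_{[k]}$. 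This is in fact more uniform than the paper's treatment of non-$k$-cycles, which routes them through the same displacement casework as everything else and singles out only the all-adjacent-transpositions subcase for the infinite-loop construction of Algorithm~\ref{alg:1&n-1}.

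The genuine gap is Case~2. Every $k$-cycle other than $\CS[k]$ lands here --- including the $\CSL$ component $[k,1,\ldots,k-1]$ and all $k$-cycles covered by Algorithms~\ref{alg:2s} and~\ref{alg:no2s} --- and your sketch (lock one entry at a strategic step, let the $\CS$ tail reshuffle into a collision) is only a statement of intent. The proofs of Propositions~\ref{prop:2s} and~\ref{prop:no2s} show that choosing \emph{which} entries to lock on $\gamma_2$ and verifying that $\gamma_4$ is a legal end state (no entry accidentally returning to a forbidden slot) require real casework on $D$; without that, the argument is incomplete. There is also a boundary issue your induction misses: if $s_3 = [3,1,2]$, your first non-$\CS$ index is $k=3$, but the restricted strategy on $[3]$ is leftward cyclic shift, which by Lemma~\ref{lem:nodupes} has \emph{no} offending permutation, so no witness with $\mathcal{I}_1=\{1,2,3\}$ exists. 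The paper resolves this by reading the shift direction off $s_3$, redefining $\kappa\ge 4$ as the first component deviating from that direction, and running the mirrored construction with the left-displacement of Definition~\ref{def:leftdisp} (Algorithm~\ref{alg:GEN}); your induction needs the analogous patch.
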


To prove this claim, we present an algorithm which, given the final component $S[n]$ of an \textit{inductive} strategy, constructs a permutation which, when acting as the secret permutation in a game of permutation wordle, causes the strategy $S$ to duplicate incorrect information. We will call these permutations \textit{offending permutations}, denoted $\omega$. Once we have a construction for inductive strategies, we will extend our algorithm in Section \ref{sec:allstrats} to include any strategy whose components are derangements, thereby proving the full claim. 

\subsection{Displacement} \label{sub:disvec}

Recall that all games of permutation wordle begin with the trivial guess, $[1,2,\dots, n]$. Then in order for the strategy component $S[n]$ to be utilized in the game (without cyclic shifting induction immediately taking over and producing no repeated incorrect entries per Lemma \ref{lem:nodupes}), we need $\mathcal{I}_1 = \emptyset$, so the secret permutation must be a derangement of $[n]$. Since $\mathcal{I}_1 = \emptyset$, when $S[n]$ generates the next guess we get that $\gamma_2 = S[n]^{-1}$. Since $S[n]$ is a derangement, the inverse is also a derangement, and for the remainder of the algorithm we will use this derangement $S[n]^{-1}$ to construct an offending permutation. 

The simplest way to force the repetition of bad information in a game of permutation wordle is to impel an entry $i$ back into position $i$, meaning that for some $t > 1$, we have $\gamma_1[i] = \gamma_t[i] = i$. To do this, we introduce the notion of \textit{displacement} as follows: 

\begin{definition} \label{def:disp}
The displacement vector $d(\pi)$ of a permutation $\pi \in S_n$ is the list of length $n$ such that $d(\pi)[i] := \pi[i] - i $ for all $1 \leq i \leq n$.    
\end{definition}

In other words, the list entry $d(\pi)[i]$ is the number of times that the entry $i$ must be shifted rightward before entry $i$ is in position $i$. 
For example, the rightward cyclic shifting component $S[n] = [2,3, \dots, n,1]$ has a displacement vector of $[1,1,\dots,1]$. Similarly, the left shifting component $S[n] = [n, 1,2, \dots, n-1]$ has a displacement vector $[n-1,n-1,\dots,n-1]$. For a more interesting example, the permutation $[5,3,2,6,1,4]$ has a displacement vector $[4,1,5,2,2,4]$. 

Then, to construct an offending permutation, we will study the displacement vector of $\gamma_2 = S[n]^{-1}$, as this will tell us how many times each entry in $\gamma_2$ must be right-shifted before returning to their position in $\gamma_1$. Since $S[n]^{-1}$ is a derangement, the displacement vector must consist of all nonzero numbers as there are no fixed points. From here, we will divide our study into two cases: displacement vectors which contain at least one element $e$ such that $2 \leq e \leq n-2$, and displacement vectors consisting of only $e \in \{1,n-1\}$. 
We begin by presenting an outline of our construction for a strategy of length 4 with the intention of motivating each step of the algorithm before generalizing to larger strategy lengths.

\subsection{Vectors Containing 2} \label{sub:2s}

In the case of $n=4$, note that $2 = n-2$, so any permutation whose inverse has at least one displacement vector entry $e$ such that $2 \leq e \leq n-2$ actually has an entry which is precisely equal to 2. Let position $i$ in $\gamma_2$ be an inverse entry with displacement equal to 2. Say that $\gamma_2[i+1] \in \mathcal{J}_2$. (Note, of course, that all of this addition is modulo $n$, so entries which would ``fall off" the end of the permutation actually loop back around to the front.) Then to produce $\gamma_3$, we right shift all incorrect entries, which means that the entry $\gamma_2[i]$ is shifted past the correct entry in position $i+1$ and becomes $\gamma_3[i+2]$. But since $\gamma_2[i]$ had displacement of 2, $\gamma_2[i] = i+2$, and hence $\gamma_2[i] = \gamma_1[i+2]$ seeing as our first guess is always the trivial permutation. Here's the rub, since $\gamma_3[i+2] = \gamma_2[i] = \gamma_1[i+2]$, and $i+2 \notin \mathcal{J}_1$ since $\mathcal{J}_1 = \emptyset$. Hence the same entry was guessed incorrectly in position $i+2$ more than once!

So if $\gamma_2[i+1] \in \mathcal{J}_2$, then we know that an entry will repeat on the third guess, $\gamma_3$. It suffices to construct the rest of the offending permutation $\omega$ in a way that will result in a legal game. Certainly the repeated entries must be right shifted at least once more in order to guess the secret permutation and end the game, so for the sake of simplicity say that $\mathcal{J}_2 = \{i+1\}$ and thus all other entries are right shifted to form $\gamma_4$. It remains, then, to verify that each of these entries is in a new position which they have not occupied before so that $\gamma_4$ can legally end the game. The details of this construction are expounded in the upcoming proof of Proposition \ref{prop:2s}.

As an example, let $S[4] = [2,4,1,3]$. Then $S[4]^{-1} = \gamma_2 = [3,1,4,2]$, and $d(S[4]^{-1}) = [2,3,1,2]$. Then let $i = 1$ since the element 3 in position one of $S[4]^{-1}$ has displacement of 2. Next, we let $i+1 = 2 \in \mathcal{J}_2$, so the entry 1 in position two is a correct entry. Then $\gamma_3 = [2,1,3,4]$ since we right shift all entries except for the correct entry, 1. Note that entries 3 and 4 are guessed in positions three and four, respectively, but they were already guessed in those positions in $\gamma_1$, so we have duplicated bad information. Lastly, we let $\mathcal{J}_3 = \mathcal{J}_2 = \{2\}$ and right shift all other entries one final time to obtain $\gamma_4 = [4,1,2,3]$. Note that if the secret permutation is $[4,1,2,3]$, then this is a legal game of permutation wordle, and using the inductive strategy with $S[4] = [2,4,1,3]$ caused the duplication of incorrect information between $\gamma_1$ and $\gamma_3$.

Note, of course, that an offending permutation need not repeat incorrect information in $\gamma_3$, specifically. Especially for larger $n$ which may lead to games lasting a longer number of turns, there are many offending permutations whose first duplicated incorrect entry occurs in the fourth guess or later. However, we only need to produce \textit{one} offending permutation in order to prove our claim, and the most straight-forward way to do this is be forcing repetition as soon as possible.

We formalize this constructive algorithm as follows, noting that addition of indices happens mod $n$, so $i+ 1 = (i \mod n) + 1$, for example. This algorithm is also implemented in the second of two Maple packages linked in the Appendix as the procedure \texttt{construct(p,right)}.

\begin{algorithm} 
\caption{Offending Permutation Constructor For $2 \in D$} \label{alg:2s}
\begin{enumerate}
    \item [(0)] Given the $n$-th strategy component $S[n]$ of an inductive strategy, consider $D = d\left( S[n]^{-1} \right)$

    \item Initialize $K = \emptyset$. For each $i \in [n]$, if $D[i] = 2$ and $i-1 \notin K$, add $i$ to $K$. Stop when $|K| = n-3$, even if there are remaining 2's to consider.
    
    \textit{Special case:} If $D[1] = D[n] = 2$ and $D[j] \neq 2$ for all $j \in \{2,\dots,n-1\}$, then let $i = n$ so that $K = \{n\}$. 
    
    \item For each $i \in K$, let $\omega[i+1] = S[n]^{-1}[i+1]$.

    \item For each $j \in \{1,\dots, n\} \setminus K$, shift $S[n]^{-1}[j]$ rightward twice, skipping over the entries locked in during Step (2). In other words, if $L = \{1, \dots, n\} \setminus K$ where $|L| = m$, then $\omega [L[j+2]] = S[n]^{-1}[L[j]]$ for $1 \leq j \leq m$.
    
\end{enumerate}
\end{algorithm}

Observe that in Step (1) we required that $(i-1) \notin K$ before adding $i$ to $K$. Without this condition, if, for example, $D = [2,2,\dots,2]$, then $K = \{1,\dots, n\}$ and so $\omega = \gamma_2$, which does not produce any repetition as the game ends after two guesses. Then, to ensure that another guess is generated (which in turn causes the repetition of incorrect information), we must have at least three entries not in $K$, hence the other condition in Step (1). We conclude our analysis of displacement vectors containing the entry 2 by proving the aforementioned algorithm's efficacy:

\begin{proposition} \label{prop:2s}
    If $S[n]$ is a derangement of length $n$ such that $2 \in D = d\left( S[n]^{-1} \right)$, then Algorithm \ref{alg:2s} produces an offending permutation $\omega$ for the inductive strategy with $S[n]$.
\end{proposition}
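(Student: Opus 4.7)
The plan is to verify that the permutation $\omega$ produced by Algorithm \ref{alg:2s} is a well-defined derangement of $[n]$, and that playing the inductive strategy $S$ against the secret $\omega$ yields a legal game $\gamma_1, \gamma_2, \gamma_3, \gamma_4$ in which some incorrect entry appears in the same position of both $\gamma_1$ and $\gamma_3$. Once $\omega$ is known to be a derangement, we have $\mathcal{J}_1 = \emptyset$ and therefore $\gamma_2 = S[n]^{-1}$, exactly as discussed before Definition \ref{def:disp}. The essential mechanism is the observation already highlighted there: for each $i \in K$ we have $D[i] = 2$, so $S[n]^{-1}[i] = i+2$ (indices mod $n$); when Step (2) locks position $i+1$, the incorrect entry $i+2$ sitting at position $i$ in $\gamma_2$ gets right-shifted past the locked entry and into position $i+2$ when $\gamma_3$ is formed, duplicating $\gamma_1[i+2] = i+2$.

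For the game trace, I would first show that Step (2)'s assignment $\omega[i+1] = S[n]^{-1}[i+1]$ gives $\{i+1 : i \in K\} \subseteq \mathcal{J}_2$, and then verify that no other position coincidentally becomes correct by invoking the fixed-point-free property of Step (3)'s cyclic double-shift. The right-shift from $\gamma_2$ to $\gamma_3$ traverses the non-locked positions cyclically and, for each $i \in K$, places $S[n]^{-1}[i] = i+2$ at position $i+2$, realizing the desired duplication against $\gamma_1$; since $\omega$ is a derangement, $\omega[i+2] \ne i+2$, so this is genuinely a repeat of \emph{incorrect} information. Setting $\mathcal{J}_3 = \mathcal{J}_2$ and applying the shift once more then yields precisely the permutation specified by Step (3)'s formula $\omega[L[j+2]] = S[n]^{-1}[L[j]]$, so $\gamma_4 = \omega$ and the game terminates. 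Bijectivity of $\omega$ reduces to checking that the locked values from Step (2) and the shifted values from Step (3) partition the entries of $S[n]^{-1}$, which they do because the locked and non-locked positions partition $[n]$.

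The main obstacle is verifying that the two safeguards in Step (1) are precisely what is required for this bookkeeping. The adjacency constraint $i - 1 \notin K$ prevents two adjacent positions of $\gamma_2$ from being locked simultaneously, which would break the ``skip exactly one position'' shift that produces the duplication and would also cause Step (3)'s formula to overwrite locked positions inconsistently. The size cap $|K| \le n-3$ forces $|L| \ge 3$, guaranteeing that the cyclic double-shift on $L$ has no fixed points and hence that $\omega$ is a genuine derangement distinct from both $\gamma_2$ and $\gamma_3$; without it, the game could end prematurely at $\gamma_2$ or $\gamma_3$ and exhibit no repetition. Finally, I would handle the special case $D[1] = D[n] = 2$ with $D[j] \ne 2$ for $2 \le j \le n-1$ separately: here the generic loop would admit both $1$ and $n$ into $K$ and violate adjacency cyclically, so the algorithm hand-picks $i = n$ to obtain the single-element $K = \{n\}$, after which the same argument goes through verbatim under the convention that index arithmetic is modulo $n$.
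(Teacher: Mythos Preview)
Your overall plan mirrors the paper's proof: both trace the game $\gamma_1,\gamma_2,\gamma_3,\gamma_4$, identify $\mathcal{J}_2=K+1$, exhibit the repetition $\gamma_3[i+2]=\gamma_1[i+2]$ for $i\in K$, and then argue that $\gamma_4=\omega$ is a legitimate terminating guess. The paper carries out this last step by an explicit position-by-position case analysis, tracking where each entry of $\gamma_2$ lands in $\gamma_4$ (at $j+2$, $j+3$, or $j+4$ according to how many locked positions are skipped) and checking in every case that the landing spot was not occupied by that entry in $\gamma_1$, $\gamma_2$, or $\gamma_3$.

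There is, however, a genuine gap in your treatment of the derangement property. You claim that $|L|\ge 3$ makes the cyclic double-shift on $L$ fixed-point-free ``and hence $\omega$ is a genuine derangement.'' But the double-shift being fixed-point-free only gives $\omega[L[j+2]]=S[n]^{-1}[L[j]]\ne S[n]^{-1}[L[j+2]]$, i.e.\ $\omega$ differs from $\gamma_2$ on $L$; this is exactly what you need for $\mathcal{J}_2=K+1$, but it does \emph{not} yield $\omega[p]\ne p$. For that you must show $S[n]^{-1}[L[j]]\ne L[j+2]$, which depends on the actual displacement values and on which locked positions are skipped---precisely the content of the paper's casework. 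This verification is not vacuous: for $S[4]=[3,4,1,2]$ one has $D=[2,2,2,2]$, $K=\{1\}$, and the resulting $\omega$ from Algorithm~\ref{alg:2s} has a fixed point (the paper flags this case separately in Section~\ref{sub:1&n-1}). If $\omega$ is not a derangement then $\mathcal{J}_1\ne\emptyset$, $S[n]$ is never invoked, and the whole game trace collapses. So you cannot bypass the casework: you need to argue, using $D[i]=2$ for $i\in K$, the non-adjacency condition $i-1\notin K$, and the structure of $L$, that no non-locked entry is carried by two right-shifts back to its identity position.
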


\begin{proof}
    Let $S[n]^{-1}$ have a displacement vector with at least one 2. Let $\omega$ be as defined in Algorithm \ref{alg:2s}. In a game of permutation wordle where $\gamma_2 = S[n]^{-1}$, say that $\mathcal{J}_2 = K+1$, i.e., the set $K$ where each element is incremented by 1 (modulo $n$.) Then $\mathcal{J}_2 \neq \emptyset$ since there is at least one 2 in $D$, so $\gamma_3$ is generated by the strategy component $S[n - |K|]$, which is right-shifting since $S$ is an inductive strategy. Then all incorrect entries shift once rightward in $\gamma_3$. 

    For each index $i \in K$, the entry in position $i+1$ (immediately to the right) is in $\mathcal{J}_2$, so the entry $\gamma_2[i]$ is right-shifted into position $i+2$ in $\gamma_3$. Since $D[i] = 2$, we have that $\gamma_2[i] = \gamma_3[i+2] = i+2$, and so the entry $i+2$ is guessed in the same position as in $\gamma_1$, thereby repeating incorrect information since $\mathcal{I}_3 \subset \mathcal{I}_1$. 

    According to the construction of $\omega$, $\mathcal{J}_3 = \mathcal{J}_2$, so every index not in $\mathcal{J}_2 = K+1$ will be right-shifted one more time to form $\gamma_4 = \omega$. Then it suffices to show that $\gamma_4$ is a legal end state for the game, i.e. that there are no additional repeated incorrect entries on guess four. Each $i \in K$ ends in position $i+3$ or $i+4$ after skipping over one locked entry to go from $\gamma_2$ to $\gamma_3$, and either skipping over one more entry to go from $\gamma_3$ to $\gamma_4$ or not. Each element in a position $i \in K$ has only been guessed in position $i+2$ (on $\gamma_1$ and $\gamma_3$) or in position $i$ on $\gamma_2$. Either way, $i+3$ and $i+4$ must be new positions as long as $i \geq 4$. Note that the $i+4$ state is not obtainable for a permutation of length 4 since $|\mathcal{J}_2| \leq n-3 = 1$, so the entry $i$ cannot skip over more than one correct entry.

    For any $j \notin K \cup \mathcal{J}_2$, we right shift twice. Then the end position of $\gamma_2[j]$ is either $j+2$ (if no correct entries are skipped over), $j+3$ (if one correct entry is skipped over) or $j+4$ (if two correct entries are skipped over). Notice that since $j \notin K$, the displacement of $\gamma_2[j]$ is not equal to 2, so if $\gamma_2[j]$ ends in position $j+2$, this is legal. If $\gamma_2[j]$ ends in position $j+3$, then it must have skipped over a correct entry to go from $\gamma_2$ to $\gamma_3$ or to go from $\gamma_3$ to $\gamma_4$. If $\gamma_2[j]$ skipped over a correct entry after $\gamma_2$, then $j+1 \in \mathcal{J}_2$, but this means that $j \in K$, which is a contradiction. If $\gamma_2[j]$ ends in position $j+4$, it must have skipped over one correct entry after each of $\gamma_2$ and $\gamma_3$ since there are no adjacent entries in $\mathcal{J}_2$ by construction. But once again we have a contradiction as this would mean that $j+1 \in \mathcal{J}_2$. So the only problematic possibility is if $\gamma_2[j]$ ends in position $j+3$ after skipping over a correct entry on $\gamma_3$. However, $\gamma_2[j]$ cannot have previously been guessed in position $j+3$ since position $j+2 \in K$ and hence $j+1$ was an entry with displacement of 2, meaning $\gamma_1 [j+1] = \gamma_3 [j+1] = j+1$. The only way that $\gamma_2[j]$ could have previously been guessed in position $j+3$ is if it happened in $\gamma_2$, but we know that $\gamma_2[j]$ was in position $j$ on $\gamma_2$ by definition. Hence, $\gamma_2[j]$ ends in a legal position at the end of $\gamma_4$.  
\end{proof}

\subsection{Vectors Not Containing 2} \label{sub:no2s}

For a strategy of length 4, we note that the displacement vector of $S[n]^{-1}$ can only have 1, 2, and 3 as entries, so all strategies can either be handled by the process described above or by the approach which will be outlined in Section \ref{sub:1&n-1}. However, the displacement vector of $S[n]^{-1}$ for a general strategy component of length $n$ will have entries in $\{1,2,\dots,n-1\}$, meaning that it is possible for the displacement vector used to construct $\omega$ to not have any 2's. The case where all entries are either $1$ or $n-1$ is detailed in the next section, so for now we turn our attention to displacement vectors with some entry $e \in \{3,\dots,n-2\}$. We further generalize our approach from the previous section as follows:

\begin{algorithm}
\caption{Offending Permutation Constructor For $2 \notin D$} \label{alg:no2s}

\begin{enumerate}
    \item [(0)] Given the $n$-th strategy component $S[n]$ of an inductive strategy, consider $D = d\left( S[n]^{-1} \right)$. This time, let $\mu = \min_{e \in D \setminus \{1\}}$.

    \item Let $\iota = \min \{ x \mid D[x] = \mu \}$, and let $K = \{ \iota + k \mid 1 \leq k \leq \mu - 1 \}$. 
    
    \item For each $i \in K$, let $\omega[i] = S[n]^{-1}[i]$. 

    \item For each $j \in [n] \setminus K$, shift $S[n]^{-1}[j]$ rightward twice, skipping over the entries locked in during Step (2). In other words, if $L = [n] \setminus K$ where $|L| = m$, then $\omega[L[j+2]] = S[n]^{-1}[L[j]]$ for $1 \leq j \leq m$.

    Observe that in this case, the entries $\iota$ and $\iota-1$ are each shifted $\mu + 1$ spaces rightward, while all other entries are shifted twice rightward without skipping.
    
\end{enumerate}
\end{algorithm}

As an example, consider $S[6] = [3,4,6,5,1,2]$. In such a game, $S[6]^{-1} = \gamma_2 = [5,6,1,2,4,3]$, and thus $D = [4,4,4,4,5,3]$. Then $\mu = 3$ and $\iota = 6$, so $K = \{1,2\}$. By our construction, $\mathcal{J}_2 = \{1,2\}$, so $\gamma_3 = [5,6,3,1,2,4]$ and we observe that the entry 3 is guessed in the same incorrect position as it was in $\gamma_1$. Allowing $\mathcal{J}_2 = \mathcal{J}_3$ once again, we obtain $\gamma_4 = [5,6,4,3,1,2]$, and if $\gamma_4$ is the secret permutation, our game ends here. 

\begin{proposition} \label{prop:no2s}
    If $S[n]$ is a derangement of length $n$ such that $2 \notin D = d\left( S[n]^{-1} \right)$, then Algorithm \ref{alg:no2s} produces an offending permutation $\omega$ for the inductive strategy with $S[n]$.
\end{proposition}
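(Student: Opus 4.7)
The plan is to follow the template of Proposition~\ref{prop:2s}: I would set $\sigma := S[n]^{-1}$, so that $\gamma_2 = \sigma$ once $\omega$ is known to be a derangement, and then verify four claims in sequence: (i) $\omega$ is a derangement; (ii) $\mathcal{J}_2 = K$; (iii) the intended duplication appears in $\gamma_3$; and (iv) $\mathcal{J}_3 = K$, so that $\gamma_4 = \omega$ exactly. The hypothesis $2 \notin D$ and the size condition $\mu \le n-2$ (with the complementary case $D \subset \{1, n-1\}$ deferred to the next subsection) should each get used exactly once in this verification.

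For (i), I would split positions $i \in L := [n] \setminus K$ according to how many locked positions the map $L[j] \mapsto L[j+2]$ skips over. Because $K = \{\iota+1,\dots,\iota+\mu-1\}$ is a single block of consecutive indices, only $L[j] = \iota-1$ and $L[j] = \iota$ produce the long shift of size $\mu+1$; every other $i \in L$ undergoes a clean shift of size $2$. A fixed point in the short-shift case would force $D[i-2] = 2$, contradicting the hypothesis. For $L[j] = \iota$, we have $\omega[\iota+\mu+1] = \sigma(\iota) = \iota+\mu \neq \iota+\mu+1$. The delicate case is $L[j] = \iota-1$: a fixed point would demand $\sigma(\iota-1) = \iota+\mu$, but $\sigma(\iota) = \iota+\mu$ already, so the injectivity of $\sigma$ rules it out.

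Once $\omega$ is a derangement I get $\gamma_2 = \sigma$, and the same injectivity argument should yield (ii): for $i \in K$, $\omega[i] = \sigma(i) = \gamma_2[i]$ by construction, and for $i \in L$ the preimage $L[j]$ with $L[j+2] = i$ differs from $i$, so $\omega[i] = \sigma(L[j]) \neq \sigma(i) = \gamma_2[i]$. The inductive strategy then cyclically right-shifts $L$ by one slot to build $\gamma_3$, and the entry originally at position $\iota$ skips over all of $K$ and lands at position $\iota+\mu$, yielding $\gamma_3[\iota+\mu] = \sigma(\iota) = \iota+\mu = \gamma_1[\iota+\mu]$. Since $\omega[\iota+\mu] \neq \iota+\mu$ by (i), this is the required duplication of incorrect information, establishing (iii).

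For (iv) the injectivity argument should repeat verbatim to give $\mathcal{J}_3 = K$, after which a second right-shift of $L$ produces $\gamma_4$ exactly equal to $\omega$ by the definition in Algorithm~\ref{alg:no2s}, so the game terminates cleanly on turn four with no spurious repetitions. I expect the main obstacle to be step (i), specifically the subcase $L[j] = \iota-1$: this is the single place where raw injectivity of $\sigma$ rather than the hypothesis $2 \notin D$ is doing the work, and tracking the cyclic wrap-around of indices against the block structure of $K$ is where the bookkeeping is most likely to go wrong.
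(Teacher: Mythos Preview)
Your proposal is correct and follows essentially the same approach as the paper. Both arguments exploit that $K$ is a single contiguous block, so only the two positions $\iota-1$ and $\iota$ in $L$ undergo the long shift of $\mu+1$ while all others shift by exactly $2$; and both use the hypothesis $2\notin D$ to rule out spurious fixed points or coincidences in the short-shift case. Your organization is more systematic than the paper's: you explicitly isolate the derangement check (i) and the verifications $\mathcal{J}_2=K$ and $\mathcal{J}_3=K$ via injectivity of $\sigma$, whereas the paper asserts these ``by construction'' and instead phrases the final step as checking that $\gamma_4$ is a ``legal end state'' through position-by-position tracking. The injectivity argument you sketch for (ii) and (iv) is cleaner than the paper's case analysis, and your handling of the delicate $L[j]=\iota-1$ subcase (ruling out $\sigma(\iota-1)=\iota+\mu$ because $\sigma(\iota)=\iota+\mu$ already) fills in exactly the step the paper handles by noting that $\gamma_2[\iota-1]$ was in position $\iota-1$ in $\gamma_2$ and hence not in position $\iota+\mu$.
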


\begin{proof}
Let $S[n]^{-1}$ have a displacement vector containing at least one entry in $\{3,\dots,n-2\}$ and containing no 2's. Let $\mu$ be the minimum non-1 entry in $D$, and let $\omega$ be the permutation constructed as described in the algorithm above. Once again, it suffices to show that an entry is repeated (so the permutation \textit{is} an offender) and that the final game state $\gamma_4 = \omega$ is legal. 

Let $\iota$ be a position between 1 and $n$ such that $D[\iota] = \mu$. Since $\mathcal{J}_2 = K$ consists of the $\mu-1$ entries to the right of position $\iota$ (and $\mu \geq 3$ guarantees that $\mathcal{J}_2 \neq \emptyset$) then $\gamma_3$ is generated by the right shifting component $S[n-|K|]$ since $S$ is an inductive strategy. Then entry $\gamma_2[\iota] = I$ is shifted $\mu$ positions to the right, and since $D[\iota] = \mu$ we have that $\gamma_3[I] = \gamma_1[I] = I$ and thus incorrect information is repeated during the course of a game.

Then, to form $\gamma_4$, the entry $I$ is shifted one more position to the right. Recall that $\mathcal{J}_2 = K$ so $|\mathcal{J}_2| = \mu -1$, and by our construction we have that $\mathcal{J}_3 = \mathcal{J}_2$ and therefore $|\mathcal{I}_3| = n - (\mu - 1)$. Since $3 \leq \mu \leq n-2$, we then have that $|\mathcal{I}_3| \geq 3 $, so the entry $I$ is shifted into a new position in $\gamma_4$ as it takes on a third position from $\mathcal{I}_2 = \mathcal{I}_3$. 

It remains to show that no other entry from a position $j \in \mathcal{I}_2$ ends at a repeated index. Since $\mathcal{J}_2$ consists of the $\mu -1$ entries immediately to the right of the position $\iota$, we have that the entries of $\mathcal{I}_2$ are the $n - \mu$ entries immediately before the position $\iota$. Observe that any entry $j$ two or more positions to the left of $\iota$ will be right shifted twice without skipping over any correct entries. For example, if $j = \iota -2$ then the entry in position $j$ is right shifted twice into position $\iota$, which is the position immediately left of all entries in $K$. Since no entry in $\gamma_2$ had a displacement of 2, it is impossible for any of these entries to end in their same position in $\gamma_1$, and because $|\mathcal{I}_2| \geq 3$ each of the positions occupied by these entries in $\gamma_3$ and $\gamma_4$ must be new, hence there is no repetition and the ending state is legal.

The only remaining entry to consider is the entry in position $\iota-1$, since this entry will be right shifted into position $\iota$ on $\gamma_3$ and then into position $\iota + \mu$ on $\gamma_4$. However, much like before we observe that $\gamma_2[\iota-1]$ cannot have been guessed in position $\iota + \mu$ on a previous guess since the entry $I = \gamma_2[\iota]$ was guessed there in $\gamma_1$ and $\gamma_3$, and we know that $\gamma_2[\iota-1]$ was guessed in position $\iota-1$ in $\gamma_2$, by definition. And of course, $\iota-1 \neq \iota + \mu$ since $\mu \leq n-2$. Then every entry ends in a new position, so $\gamma_4 = \omega$ is a legal end state/secret permutation.  
\end{proof}

\subsection{Displacement Vectors of Derangements} \label{sub:1&n-1}

Recall that the sum of all entries in the displacement vector must be a multiple of $n$, so if a displacement vector consists only of 1's and $(n-1)$'s, then the vector must be either $[1,\dots, 1]$, $[n-1, \dots, n-1]$, or it must have an equal number of 1's and $(n-1)$'s. 

The two cyclic shifting strategy components are discussed in greater detail in Section \ref{sub:cycshift}, so we will focus only on the latter case. If the displacement vector does have an equal number of 1's and $(n-1)$'s, then the vector must have the form $[1,n-1,1,n-1,\dots, 1,n-1]$ or vice versa, otherwise two entries have displacements calculated from the same position, which is not possible. Such a permutation is actually a derangement consisting of $\frac{n}{2}$ 2-cycles. For example, the permutation $[6,3,2,5,4,1]$ has a displacement vector $[1,5,1,5,1,5]$ and consists of the two cycles $(16)(23)(45)$ when written in cycle notation. Observe also that these permutations are their own inverses, so $\gamma_2 = S[n]$. Then to prove Theorem \ref{thm:alldupe}, it suffices to show that we can still construct a repeating permutation when $S[n]$ is a derangement of this form.
 
We begin with an example for $n=4$. If $S$ is a strategy of length 4 with $S[4] = [2,1,4,3]$, then $S$ will repeat incorrect information when the secret permutation is $[3,4,1,2]$. Additionally, such a game can never terminate since $\gamma_1 = [1,2,3,4]$, and $\mathcal{J}_1 = \emptyset$ so $\gamma_2 = [2,1,4,3]$, but then $\mathcal{J}_2 = \emptyset$, so $\gamma_3 = [1,2,3,4] = \gamma_1$, and so on. 
\[ 
\begin{array}{ccc}
    \gamma_1 = [1,2,3,4] & & \mathcal{J}_1 = \emptyset \\
    \gamma_2 = [2,1,4,3] & & \mathcal{J}_2 = \emptyset \\
    \gamma_3 = [1,2,3,4] & & \mathcal{J}_3 = \emptyset \\
    \gamma_4 = [2,1,4,3] & & \mathcal{J}_4 = \emptyset \\
    \vdots & & \vdots
\end{array} 
\]

Notice that our guessing game enters an infinite loop wherein $S[4]$ repeatedly produces the same two guesses with no correct entries. Since $\mathcal{J}_s$ will always be empty for any $s \geq 1$, we will never be able to use a different strategy component to produce a different guess. Then the number of guesses needed to correctly guess $[3,4,1,2]$ is infinite. 
A similar argument applies to derangements $[2,3,1,5,6,4]$ or $[2,1,4,3,6,5]$ when used as $S[6]$, and so on. We generalize this observation as follows:

\begin{theorem}
    Let $\delta$ be a derangement of length $n$ with cycle type $[t_1, t_2, \dots, t_k]$. Let $\mu_i := |\{ j \mid t_j = t_i \}|$, or in other words $\mu_i$ is the multiplicity of the cycle length $t_i$ in the cycle type of $\delta$. If $\mu_i \geq 2$ for all $1 \leq i \leq n$, then any strategy with $S[n] = \delta$ will enter an infinite loop for at least one possible secret permutation $\omega$ of length $n$.
\end{theorem}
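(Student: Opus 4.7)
The plan is to reduce the infinite-loop condition to an algebraic one on iterates of $\delta$, then construct $\omega$ explicitly by exploiting the cycle structure. Because $\omega$ will be a derangement, $\mathcal{J}_1 = \emptyset$ and so the component $S[n] = \delta$ is used to generate $\gamma_2 = \delta^{-1}$. If we can arrange that $\mathcal{J}_s = \emptyset$ for every subsequent $s$, then $S[n]$ is the only component ever invoked, and induction gives $\gamma_{k+1} = \delta^{-k}$ for all $k \geq 0$. The game therefore fails to terminate precisely when $\omega(i) \neq \delta^{-k}(i)$ for every $i \in [n]$ and every $k \geq 0$; equivalently, $\delta^{k}\omega$ is a derangement for every $k \geq 0$. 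The task reduces to producing $\omega$ satisfying this single algebraic condition.

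To build $\omega$, I would group the disjoint cycles of $\delta$ by length. For each length $t$ that appears with multiplicity $\mu_t \geq 2$, choose any fixed-point-free permutation $\sigma_t$ on the set of $t$-cycles (when $\mu_t = 2$, take the swap; when $\mu_t \geq 3$, take any cyclic permutation of the cycles). Write each $t$-cycle of $\delta$ in the form $C = (c_1, c_2, \ldots, c_t)$ with $\delta(c_i) = c_{i+1 \bmod t}$, fixing an arbitrary starting point for each. Define $\omega$ by setting $\omega(c_i) := c'_i$, where $(c'_1, c'_2, \ldots, c'_t) = \sigma_t(C)$. Since $\sigma_t$ has no fixed points on the set of cycles, $\omega(c_i)$ lies in a cycle of $\delta$ \emph{distinct} from the one containing $c_i$, so $\omega$ is a well-defined bijection $[n] \to [n]$.

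Verification of the derangement property for every iterate is then immediate: for any $c_i \in C$, we compute $\delta^k \omega(c_i) = \delta^k(c'_i) = c'_{i+k \bmod t}$, which still lies in $\sigma_t(C)$. Since $C$ and $\sigma_t(C)$ are disjoint, $\delta^k \omega(c_i) \neq c_i$, so $\delta^k \omega$ has no fixed points for any $k \geq 0$. In particular $\omega$ itself is a derangement, $\omega \neq \delta^{-k}$ for any $k$ (since otherwise $\delta^k \omega = \mathrm{id}$ would have $n$ fixed points), and each $\gamma_{k+1} = \delta^{-k}$ shares no correct entry with $\omega$, so $\mathcal{J}_{k+1} = \emptyset$ and only $S[n]$ is ever applied. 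Thus the game enters the infinite loop $\mathrm{id}, \delta^{-1}, \delta^{-2}, \ldots$, none of which equals $\omega$.

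The main obstacle is handling cycle lengths that appear with odd multiplicity (in particular $\mu_t = 3$), where a naive involutive pairing strategy fails. The hypothesis $\mu_t \geq 2$ is exactly what is needed to guarantee existence of a fixed-point-free permutation $\sigma_t$ on the set of $t$-cycles, which is the only nontrivial input to the construction; everything else is a direct cycle-chasing check.
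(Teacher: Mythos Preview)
Your proof is correct and follows essentially the same approach as the paper: both constructions group the cycles of $\delta$ by length, choose a fixed-point-free permutation of the cycles within each length class (the paper uses the cyclic shift $c_i^j \mapsto c_i^{j+1}$, you allow any such $\sigma_t$), and define $\omega$ to carry each element to its counterpart in the paired cycle. Your algebraic reduction to the single condition ``$\delta^k\omega$ is a derangement for every $k\ge 0$'' makes the verification a one-line cycle computation, whereas the paper argues the same fact more informally by tracking which block of positions each entry can occupy; the underlying idea is identical.
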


\begin{proof}
Let $\delta$ be a derangement with with cycle type $[t_1, t_2, \dots, t_k]$ such that $\mu_i \geq 2$ for all $1 \leq i \leq k$. Let $\delta = c_1 c_2 \dots c_k$ be the partition of $\delta$ into cycles where the length of $c_i$ is $t_i$ for each $i$. 
For a unique cycle length $t_i$, let $\mathcal{C}_i = \{c_i^1, c_i^2, \dots, c_i^{\mu_i} \}$ be the set of cycles in $\delta$ that each have length $t_i$. Then to construct an offending permutation $\omega$, map the elements of $c_i^j$ to the positions occupied by $c_i^{j+1}$ in $\delta$, in other words $\omega[c_i^j[a]] = c_i^{j+1}[a]$ for any $1 \leq a \leq t_i$. Repeat this mapping for all $1 \leq j \leq \mu_i$ (modulo $\mu_i$, of course, so that $c_i^{\mu_i}$'s elements map to the positions occupied by $c_i^1$) and for all unique cycle lengths $t_i$. Consider the example below where $\delta$ has length 10 and decomposes into two 2-cycles and two 3-cycles:
\[ 
\delta = [2,3,1,5,4,7,8,6,10,9] 
= (1,2,3)(4,5)(6,7)(8,9,10) 
\implies \omega = [8,9,10,6,7,4,5,1,2,3]
\]

Observe that a permutation wordle player using the inductive strategy such that $S[n] = \delta$ will enter an infinite loop while attempting to guess $\omega$. The player's first guess is always the trivial permutation $[1,2,\dots, n]$, and since $\delta$ was a derangement, each cycle $c_i$ in $\delta$ has length at least 2. Since $\mu_i \geq 2$ for all $i$, no entries of $\omega$ are shared with the trivial permutation. Then $\delta = S[n]$ generates the next guess as follows:
\[
\begin{array}{cc}   
\gamma_1 = \underbrace{1,2,\dots,t_1}_{c_1},\underbrace{t_1 + 1, \dots, t_1 + t_2}_{c_2}, \dots, \underbrace{n - t_k, \dots, n}_{c_k}  \\  
\gamma_2 = \underbrace{2,\dots,t_1,1}_{},\underbrace{t_1 + 2, \dots, t_1 + t_2, t_1+1}_{}, \dots, \underbrace{n - t_k + 1, \dots, n, n-t_k}_{}  \\
\end{array}
\]

Once again, none of the entries will be correct in $\gamma_2$ since each entry of $c_1$ was mapped to a location within the first $t_1$ locations, and the same for $c_2$, and so on. Since $\mathcal{J}_2 = \emptyset$, we generate $\gamma_3$ according to $\delta$ once again, but by the same logic as before, $\mathcal{J}_3 = \emptyset$. This process will repeat infinitely without ever locking in any correct entries, so we cannot guess the permutation $\omega$ in finitely many guesses using $S[n] = \delta$. 
\end{proof}

It is important to note that in the above proof, the length of the strategy $S$ need not be $n$. That is, the derangement $\delta$ may not be the longest component of $S$, and we can still identify a permutation that will cause this infinite looping later on in a game of permutation wordle. This concept will be studied in greater detail and broader generalization in Section \ref{sec:allstrats}.

With this in hand, we revisit our construction of offending permutations. To review, if a displacement vector consists of only 1's and $(n-1)$'s, then we showed that it must pairwise alternate between these entries. This means that the permutation $S[n]$ must be a derangement comprised of all 2-cycles, which in fact must be adjacent transpositions since each entry is either a distance of one leftward or one rightward away from its original position. Therefore $S[n]^{-1} = S[n]$, and if $\mathcal{J}_2 = \emptyset$ then $\gamma_3 = \gamma_1$. In this case, all information is repeated since all indices are incorrect and guessed in precisely the same positions as from the first guess. The game entering an infinite loop surely guarantees the repetition of incorrect information, so we construct our offending permutation such that the game enters an infinite loop. We have two possibilities using the construction described in the proof above:

\begin{algorithm}
\caption{Offending Permutation Constructor For $D \subset \{1,n-1\}^n$} \label{alg:1&n-1}
\begin{enumerate}
    \item If $S[n] = [2,1,4,3\dots, n,n-1]$, then $S[n] = (1,2)(3,4)\dots(n-1,n)$ when decomposed into cycles. Using the construction from above, we have the offending permutation $\omega = [3,4,\dots, n-1,n,1,2]$
    \item If $S[n] = [n,3,2\dots,n-1,n-2,1]$, then $S[n] = (1,n)(2,3)\dots(n-2,n-1)$ when decomposed into cycles. Using the construction from above, we have $\omega = [3,4,\dots, n-1,n,1,2]$
\end{enumerate}
\end{algorithm}

Observe that in either case, the same offending permutation is constructed! Then we have addressed the final sub-case for the displacement vector of $S[n]^{-1}$, and we are ready to generalize this construction to all strategies, not just inductive ones.

There is one case where $S[n]$ is a derangement which does not fall into this category, and it is the case where $S[4] = [3,4,1,2]$. This is a derangement consisting of the two 2-cycles $(1,3)(2,4)$, but the displacement vector takes the form $D = [2,2,2,2]$. If we employ Algorithm \ref{alg:2s}, then the secret permutation is constructed by setting $K = \{1\}$. Since $S[n]^{-1} = [3,4,1,2]$, we then have that $\omega[1] = 3$, and we right shift each of the other entries twice to obtain $\gamma_4 = \omega = [3,1,2,4]$. However, the entry 4 ends in position four, so this is not a legal end state for the game since that entry cannot be correct. The issue lies in the fact that an entry with displacement of two was right shifted two positions. In our previous proof and algorithm, this was not an issue, so we must provide an alternative construction for this case. Luckily, we can use a similar approach to the one employed in Algorithm \ref{alg:1&n-1} and let $\omega = [2,1,4,3]$. This secret permutation induces the same infinite loop incurred earlier in this section, so although $D$ does not take the form $[1,n-1,\dots,]$, this method still produces an offending permutation.

\subsection{Cyclic Shifting Components} \label{sub:cycshift}

There is are two possible displacement vectors which we omitted from our analysis thus far: the vectors $[1,1,\dots,1]$ and $[n-1,n-1,\dots,n-1]$. Observe that the latter is the case when $S[n]^{-1} = [n,1,2,\dots,n-1]$, which occurs when $S[n] = [2,3,\dots, n,1]$. But this is exactly cyclic shift, which never repeats incorrect information by Lemma \ref{lem:nodupes}. Then we need only concern ourselves with the former case, which results when $S[n]^{-1} = [2,3,\dots,n,1]$, i.e. when $S[n] = [n,1,2,\dots, n-1]$. An inductive strategy with such an $S[n]$ left shifts for the $n$-th component but right shifts for all other components, which has previously been dubbed $\CSL$ to denote the \textit{left} shifting in the $n$-th component \cite{hiveley2025}. 

To definitively prove Theorem \ref{thm:alldupe}, we must also construct at least one offending permutation for $\CSL$. As before, we note that $S[n] = [n,1,2,\dots,n-1]$, so making $\omega$ be a derangement so that $S[n]$ is used to generate at least one guess gives us $\gamma_2 = S[n]^{-1} = [2,3,\dots,n,1]$. Then all entries have displacement of 1, but we must have $\mathcal{J}_2 \neq \emptyset$ in order for right-shifting to take over. For simplicity, let $\mathcal{J}_2 = \{1\}$ so that the leading 2 in $\gamma_2$ is the only correct entry. Then $\gamma_3 = [2,1,3,\dots,n]$, and so the entries $\{3, \dots, n\}$ are all guessed in the same incorrect positions as they were in $\gamma_1$. Letting $\mathcal{J}_3 = \mathcal{J}_2 = \{1\}$ produces $\gamma_4 = [2,n,1,3,\dots,n-1]$, which is a legal final guess as long as $n \geq 4$. Each entry $j \in \{3, \dots, n-1\}$ ends in position $j+1$ after being guessed in position $j$ (in $\gamma_1$ and $\gamma-3$) and $j-1$ (in $\gamma_2$), entry 1 is guessed in position one and position $n$ before ending in position three, and entry $n$ is guessed in position $n$ and position $n-1$ before ending in position two. So as long as $n \geq 4$, each entry ends in a legal position. 

So what if $n=3$? Then $S = [[1],[2,1],[3,1,2]]$, which is the leftward cyclic shifting strategy. By Lemma \ref{lem:nodupes}, $S$ has no offending permutations, so we have considered all possible non-cyclic shifting strategies.

\section{Generalization to All Strategies} \label{sec:allstrats}

In Sections \ref{sub:2s} through \ref{sub:cycshift}, we outlined how to construct an offending permutation for an inductive strategy based upon the $n$-th strategy component, $S[n]$. We will now extend this framework to any strategy, not just inductive ones, in an effort to prove the stronger, overall claim that \textit{any} strategy (other than $\CS$) repeats incorrect information for at least one permutation.

Recall that the components of a strategy $S$ are required to be derangements since fixed points would cause incorrect entries to be guessed in the same incorrect position on consecutive guesses. Of course, $S[1] = [1]$, but this permutation is only included for the sake of completeness and standardization in our Maple code, it will never actually be utilized in a game of permutation wordle since it is not possible to have only one incorrect entry. Then $S[2] = [2,1]$ since there are only two permutations of length two: either $[1,2]$ (which consists exclusively of fixed points) or $[2,1]$ (the derangement). Note that any strategy has these same $S[1]$ and $S[2]$ components, so for an arbitrary strategy $S$ we will define $\kappa := \min \{ k \mid S[k] \neq \CS[k] \}$. Note that $\kappa \geq 2$ by the previous explanation, and of course $\kappa \leq n$ for a strategy of length $n$.

To construct an offending permutation, we want to utilize our previous work with inductive strategies, if possible. Observe that the first $\kappa$ components of $S$ form an inductive strategy of length $\kappa$. Then we will construct $\omega$ by fixing the permutation entries $\kappa+1, \kappa+2, \dots, n$ so that after guessing the trivial permutation for $\gamma_1$, we have that $\mathcal{I}_1 = \{ 1,2,\dots, \kappa \}$ and we can then use the construction from Sections \ref{sub:2s}-\ref{sub:1&n-1}. In other words, the offending permutation $\omega$ is made up of a sub-permutation of length $\kappa$ constructed according to the algorithms discussed thus far, and a sub-permutation of length $n - \kappa$ consisting exclusively of fixed points $[\kappa+1, \dots, n]$. Then we can generalize our construction to produce at least one offending permutation for any strategy $S$ (which is \textit{not} cyclic shift.)

Unfortunately, there is one edge case to consider. Say that $\kappa = 3$. Then the only derangement of length three which is not right-cyclic shifting is the permutation $[3,1,2]$, which corresponds to left-cyclic shifting as discussed in Section \ref{sub:cycshift}. However, there is no algorithm to construct an offending permutation for this case, and in fact the strategy $S = [[1],[2,1],[3,1,2]]$ has no offending permutations at all since it is cyclic shifting, and by Lemma \ref{lem:nodupes} any cyclic shifting strategy has no offending permutations. So we must generalize our offending permutation construction when $S[3] = [3,1,2]$, i.e., when inductive cyclic shifting occurs leftward. To do this, we must flip our construction so that it accommodates the leftward shifting later in the game. Then we define an analog to displacement: the \textit{left displacement}, as follows:

\begin{definition} \label{def:leftdisp}
    The left displacement vector $d_\ell(\pi)$ of a permutation $\pi \in S_n$ is the list of length $n$ such that $d_\ell(\pi)[i] := i - \pi[i]$ for all $1 \leq i \leq n$.  
\end{definition}

Then when an entry in an inductive strategy is correct, we left shift the incorrect entries backwards into positions of repetition, as before. The overall approach outlined in Algorithms \ref{alg:2s}-\ref{alg:1&n-1} therefore still holds so long as Step 3 shifts leftward rather than rightward. 

The only other modification we must make is to the approach used for $\CSL$ in Section \ref{sub:cycshift}. In this section, for the strategy $\CSL = [[1],[2,1], \dots, [2,\dots, n-1,1], [n,1,\dots, n-1]]$, we had the offending permutation $\omega = [2,n,1,3,\dots,n-1]$. The analog for left-cyclic shifting is the strategy $\CSR$ defined like so: 
\[ \CSR = [[1],[2,1],[3,1,2],\dots,[n-1,1,\dots,n-2],[2,\dots,n,1]]\] 

As before, we let $\mathcal{J}_1 = \emptyset$ so that $\gamma_2 = [n,1,2,\dots,n-1]$, and we then let $\mathcal{J}_2 = \{1\}$ so that after all entries in positions $\{2,\dots,n\}$ are left shifted, we obtain $\gamma_3 = [n,2,\dots, n-1,1]$. Here we see that entries $\{2,\dots,n-1\}$ are repeated in a familiar manner, and left-shifting the remaining entries yields $\gamma_4 = [n,3,\dots,n-1,1,2]$, which is a legal end state for the game. Then we have the following (modified) algorithm for an inductive strategy whose final component cyclic shifts in the opposite direction:

\begin{algorithm}[h!]
    \caption{Offending Permutation Constructor For $\CSL$ and $\CSR$} \label{alg:csl&csr}
    \begin{enumerate}
        \item If $S = \CSL =  [[1],[2,1], \dots, [2,\dots, n-1,1], [n,1,\dots, n-1]]$, then $\omega = [2,n,1,3,\dots,n-1]$
        \item If $S = \CSR = [[1],[2,1],[3,1,2],\dots,[n-1,1,\dots,n-2],[2,\dots,n,1]]$, then $\omega = [n,3,\dots,n-1,1,2]$
    \end{enumerate}
\end{algorithm}

With these modifications, the algorithm to construct an offending permutation for a general strategy is as follows:

\begin{algorithm}[h!]
\caption{Offending Permutation Constructor For a General Strategy $S$} \label{alg:GEN}
\begin{enumerate}
\item Check $S[3]$ to determine whether our base strategy is left or right shifting. If $S[3]=[2,3,1]$, then our base is right shifting. If $S[3]=[3,1,2]$, then our base is left shifting.

\item Let $\kappa = \min\{k \geq 4 \mid S[k] \text{ is not cyclic shifting in the direction from Step 1} \}$

\item If Step 1 was right-cyclic shifting, let $D = d(S[k]^{-1})$ and use casework from Sections \ref{sub:2s}-\ref{sub:cycshift} to construct $\omega'$. 

If Step 1 was left-cyclic shifting, let $D = d_\ell (S[k]^{-1})$ and construct $\omega'$ using casework from Sections \ref{sub:2s}-\ref{sub:1&n-1} (adjusted to left shift on Step 3) and from the modification of Section \ref{sub:cycshift} in Algorithm \ref{alg:csl&csr} in the preceding paragraph.

\item The offending permutation is $\omega = [\omega'[1], \omega'[2], \dots, \omega'[k], k+1, \dots, n] $ 
\end{enumerate}
\end{algorithm}

This construction is also implemented in the second Maple package linked in the appendix as the procedure \texttt{constructgen(s)}.

\section{Conclusion} \label{sec:conc}

In this paper, we presented an overall algorithm that takes an input strategy for a game of permutation wordle and outputs a permutation which, when acting as the secret permutation in a game using that strategy, causes incorrect information to be duplicated. Such duplication is inherently suboptimal, so the fact that this algorithm works for \textit{any} strategy other than cyclic shift leads us to the conclusion that $\CS$ is optimal in this sense. Now, it is important to note that any given strategy may have more than one offending permutation, and in fact there is often more than one. Counting and characterizing these permutations for a fixed strategy remains an open area of investigation.

For instance, in \cite{hiveley2025}, the sub-optimality of $\CSL$ was studied specifically for games ending in precisely three guesses. In fact, in this context $\CSL$ was proven to be definitively the worst inductive strategy in the sense that it guesses the \textit{fewest} secret permutations in three guesses or fewer when compared to any other inductive strategy. Experimental evidence also suggests that $\CSL$ under-performs compared to all other inductive strategies when analyzing offending permutations. The number of offending permutations for the strategy $\CSL$ of length $n \geq 4$ conjecturally exceeds the number of offending permutations for any other inductive strategy of the same length, and the number of such permutations is counted by the sequence $4, 35, 244, 1813, 14740, \dots $ This sequence does not yet have an entry in Sloane's Online Encyclopedia of Integer Sequences nor does it have a known closed form expression. 

Futhermore, our strategies each fix every component, $S[k]$, through the course of a game. This standardizes our analysis and experimental computation, but it is important to note that it is possible to change the permutation used to permute $k$ entries in the middle of a game, particularly if two consecutive guesses each resulted in the same number of incorrect entries. Say, for example, that in a game of length 5 your first guess was the trivial permutation $\gamma_1 = [1,2,3,4,5]$, but $\mathcal{J}_1 = \emptyset$. Say that you right-shifted to obtain $\gamma_2 = [5,1,2,3,4]$. If $\mathcal{J}_2 = \emptyset$ again, you may decide that this right-shifting strategy just isn't for you, and you could choose to use a different permutation to permute your five incorrect entries and produce $\gamma_3$. Under our formulation, $S[5]$ is fixed, so this would never happen, but it is important to note that in a real game this isn't forbidden.

\section*{Appendix} \label{sec:appen}
The majority of the findings in this paper are supported by two Maple packages, which are linked \href{https://aurorahiveley.github.io/wordle.txt}{here} and \href{https://aurorahiveley.github.io/wordlerep.txt}{here}. The first package was written for \cite{hiveley2025} and has not been modified from its original published state to fix a bug in an error checking procedure. The second package was written to support this paper, specifically, and it includes examples as well as a handful of procedures which verify (experimentally) each of the theorems presented in this paper. Any bugs should be reported to the author at aurora.hiveley@rutgers.edu.

\section*{Acknowledgements}
The author thanks her advisor Dr. Doron Zeilberger for the introduction to the problem and feedback on earlier drafts.

\bibliographystyle{plain}
\bibliography{sources}

\end{document}